\newtheorem{theo}{Theorem}[section]
\newtheorem{lemma}[theo]{Lemma}
\theoremstyle{definition}
\newtheorem{remark}[theo]{Remark}
\numberwithin{equation}{section}
\newcommand{\R}{\mathbb{R}}
\newcommand{\N}{\mathbb{N}}
\newcommand{\cN}{\mathcal{N}}
\renewcommand{\dim}{\mathrm{dim}}
\newcommand{\codim}{\mathrm{codim}}
\newcommand{\eps}{\varepsilon}
\newcommand{\vertiii}[1]{{\left\vert\kern-0.25ex\left\vert\kern-0.25ex\left\vert #1 
    \right\vert\kern-0.25ex\right\vert\kern-0.25ex\right\vert}}
\begin{document}
\title{Carl's inequality for quasi-Banach spaces}
\author{Aicke Hinrichs\footnote{Institute of Analysis, University Linz, Altenberger Str. 69, 4040 Linz, Austria, {\tt aicke.hinrichs@jku.at}.},
Anton Kolleck\footnote{Department of Mathematics, Technical University Berlin, Street of 17. June 136, 10623 Berlin, Germany,
{\tt kolleck@math.tu-berlin.de}; this author was supported by the DFG Research Center {\sc Matheon} ``Mathematics for key technologies'' in Berlin.},
Jan Vyb\'\i ral\footnote{Department of Mathematical Analysis, Charles University, Sokolovsk\'a 83, 186 00, Prague 8, Czech Republic, {\tt vybiral@karlin.mff.cuni.cz};
this author was supported by the ERC CZ grant LL1203 of the Czech Ministry of Education and by the Neuron Fund for Support of Science.}}
\date{\today}
\maketitle

\begin{abstract}
 We prove that for any  two quasi-Banach spaces $X$ and $Y$ and any $\alpha>0$ there exists a constant $\gamma_\alpha>0$ such that
 $$ \sup_{1\le k\le n}k^{\alpha}e_k(T)\le \gamma_\alpha \sup_{1\le k\le n} k^\alpha c_k(T) $$
 holds for all linear and bounded operators $T:X\to Y$. Here $e_k(T)$  is the $k$-th entropy number of $T$ and $c_k(T)$  is the $k$-th Gelfand number of $T$.
 For Banach spaces $X$ and $Y$ this inequality is widely used and well-known as Carl's inequality.
 For general quasi-Banach spaces it is a new result.
\end{abstract}

\section{Introduction}
The theory of $s$-numbers \cite{CaSt,Pietsch,Pinkus} (sometimes also called $n$-widths) emerged from the studies of geometry of Banach spaces and of operators
between them but found many applications in numerical analysis as well as linear and non-linear approximation theory \cite{CDD, RD, DL, N, LGM}.
It turned out to be also useful in estimates of eigenvalues of operators \cite{BC, CaTr, Koenig, Pietsch2}.

One of the most useful tools in the study of $s$-numbers is Carl's inequality \cite{BC}, which relates
the behavior of several of the most important scales of $s$-numbers  to their entropy numbers (see below for the exact definitions).
If $X$ and $Y$ are Banach spaces and if $T:X\to Y$ is a bounded linear operator between them,
then Carl's inequality states that for every natural number $n\in\N$ 
\begin{equation}\label{eq:CARL}
\sup_{1\le k\le n}k^{\alpha}e_k(T)\le \gamma_\alpha\sup_{1\le k \le n}k^{\alpha}s_k(T).
\end{equation}
Here, $e_k(T)$ denotes the entropy numbers of $T$ and $s_k(T)$ stands for any of the
approximation, Gelfand, or Kolmogorov numbers. 
For the definition of these quantities, let $T:X\to Y$ be a bounded linear operator between quasi-Banach spaces $X$ and $Y$. Then
we define the Gelfand numbers $c_n(T)$, the Kolmogorov numbers $d_n(T)$, the approximation numbers $a_n(T)$ and the entropy numbers $e_n(T)$, respectively, by
\begin{align*}
   c_n(T) &=\inf\limits_{\substack{M\subset X\\\codim\, M<n}}\sup\limits_{\substack{x\in M\\\|x\|_X\le 1}}\|Tx\|_Y\\
   d_n(T) &=\inf\limits_{\substack{N\subset Y\\\dim N<n}}\sup\limits_{\|x\|_X\le 1}\inf_{z\in N}\|Tx-z\|_Y\\
   a_n(T) &=\inf\{\|T-L\|:L:X\to Y, {\rm rank}(L)<n\}\\
   e_n(T) &=\inf \Bigl\{\varepsilon>0:T(B_X)\subset \bigcup_{j=1}^{2^{n-1}}(y_j+\varepsilon B_Y)\Bigr\}.
\end{align*}
In the last definition, $B_X$ can denote either the open or the closed unit ball in $X$. While usually the closed unit ball is used, 
for technical reasons we prefer to work with the open unit ball $B_X = \{ x\in X \,:\, \|x\|_X < 1\}$.

The main result of this note is that Carl's inequality holds also for quasi-Banach spaces and Gelfand numbers.
\begin{theo}\label{theo:main}
 Let $X$ and $Y$ be quasi-Banach spaces. Then for any $\alpha>0$ there exists a constant $\gamma_\alpha>0$ such that
 \begin{equation}\label{eq:orig1}
  \sup_{1\le k\le n}k^{\alpha}e_k(T)\le \gamma_\alpha \sup_{1\le k\le n} k^\alpha c_k(T)
 \end{equation} 
 holds for all linear and bounded operators $T:X\to Y$.
\end{theo}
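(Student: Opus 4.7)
The plan is to extend the classical dyadic proof of Carl's inequality \cite{BC} from Banach to quasi-Banach spaces. The crucial new ingredient is the Aoki--Rolewicz theorem, which provides for every quasi-Banach space an equivalent $p$-norm satisfying $\|u+v\|^p \le \|u\|^p + \|v\|^p$ for some $p\in(0,1]$ depending only on the quasi-norm constant. Passing to $p$-norms on $X$ and $Y$ costs only a multiplicative constant in \eqref{eq:orig1} and furnishes the summability that will substitute for the triangle inequality in the iterative covering estimates.

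Normalise so that $\sigma := \sup_{1\le k\le n}k^\alpha c_k(T) = 1$, noting in particular that $\|T\| = c_1(T)\le 1$. Fix dyadic levels $n_j := 2^j$ for $0\le j\le J := \lfloor\log_2 n\rfloor$, and choose at each level a subspace $M_j\subset X$ of codimension less than $n_j$ with $\|T|_{M_j}\|\le 2 c_{n_j}(T)\le 2n_j^{-\alpha}$. By passing to intersections we may take the $M_j$ nested, $M_0\supset M_1\supset\dots\supset M_J$, with codimensions at most $2n_j$. Each quotient $X/M_j$ is then a finite-dimensional $p$-normed space of dimension $d_j < 2 n_j$, in which the image of $B_X$ can be covered, by a standard volume argument, by at most $(C/\eta)^{d_j}$ balls of any chosen radius $\eta>0$, with $C$ depending only on $p$.

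The heart of the construction is a multi-scale cover of $T(B_X)$ following Carl's scheme. For each $x\in B_X$ one selects, level by level, a chain of discrete approximants $\pi_j(x)\in X$ lifted from covers of $q_j(B_X)$, such that the partial sum $\Pi_j(x):=\pi_1(x)+\dots+\pi_j(x)$ approximates $x$ modulo $M_j$ up to a small $X$-residual. The cover centre for $Tx$ is $T\Pi_J(x)$; the number of distinct centres is the product of the level-wise cover cardinalities, which is made at most $2^{n-1}$ by an appropriate choice of the per-level radii $\eta_j$. The accumulated error is controlled, via the $p$-norm summation, by the geometric sum $\sum_j\|T|_{M_j}\|^p \lesssim n^{-p\alpha}$, yielding $e_n(T) \le \gamma_\alpha n^{-\alpha}$; the full statement \eqref{eq:orig1} follows by applying this pointwise bound at every $k\le n$.

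The principal obstacle is the combinatorial balancing: one must arrange the level-wise radii $\eta_j$ and the lifting scheme in the quotients so that the total log-cover $\sum_j d_j \log(1/\eta_j)$ stays bounded by $(n-1)\log 2$ while the accumulated radius, measured in $p$-norm, decays as $n^{-\alpha}$. This uses the bound $\|T\|\le\sigma$ (obtained from $c_1(T) = \|T\|$) to prevent accumulation of factors involving $\|T\|$ in the radius, together with the $p$-summation from Aoki--Rolewicz to replace the Banach-space triangle inequality. A secondary technical point is the construction, in the quasi-Banach setting, of linear sections of the quotient maps $q_j:X\to X/M_j$ with norms controlled by $p$ and the quasi-norm constant, which is what allows the lifts $\pi_j(x)$ to exist with the required $X$-norm bounds.
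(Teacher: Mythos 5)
Your high-level architecture coincides with the paper's: Aoki--Rolewicz to pass to a $p$-norm on $X$ and a $q$-norm on $Y$, finite nets in the finite-dimensional quotients $X/M_j$ obtained by a volume argument, a product cover of cardinality $\le 2^{n-1}$ whose centres are images of sums of lifted net points, and $p$-power summation in place of the triangle inequality. The gap is in the one quantitative claim that carries the whole proof: the accumulated error is \emph{not} $\sum_j \|T|_{M_j}\|^p$, and even if it were, with $n_j=2^j$ and $\|T|_{M_j}\|\le 2n_j^{-\alpha}$ that sum is a geometric series dominated by its \emph{first} term, hence $\asymp 1$ rather than $\lesssim n^{-p\alpha}$. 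The level-$j$ contribution must carry a damping factor, and this is exactly what the paper's Lemmas \ref{lem:qchoice} and \ref{lem:qinduction} produce: at step $j$ one approximates the \emph{rescaled} residual, whose $X$-norm is already $<\delta_{j-1}=\eps_1\cdots\eps_{j-1}$, so the piece discarded into $M_j$ has norm $\lesssim\delta_{j-1}$ (not $\lesssim 1$) and contributes $\delta_{j-1}^q\,\|T|_{M_j}\|^q$ to the error. Moreover the scales must be run in the order \emph{opposite} to yours: the paper takes $\codim M_{m_\ell}=m_\ell=2^{N-\ell}$, so the subspace of largest codimension (where $\|T|_{M}\|\approx n^{-\alpha}$ is already of the target size) comes first and needs no damping, while the coarser subspaces with large $\|T|_{M}\|$ come later and are suppressed by $\delta_{\ell-1}\le 2^{-\beta(\ell-1)}$ with $\beta>\alpha$, giving $\sum_\ell\delta_{\ell-1}^q\tau_{m_\ell}^q\lesssim n^{-\alpha q}$ while the bit budget $\sum_\ell k_\ell\approx(2/p+\beta)\sum_\ell 2^{N-\ell}\le Cn$ is respected. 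You correctly identify this balancing as ``the principal obstacle,'' but you do not resolve it, and the explicit bound you propose for it is false; with your increasing-codimension ordering and per-level radii implicit in a ``geometric sum,'' the first level alone already leaves an error $\approx 2^{-\alpha}$, which no later level can repair.

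Two secondary points. First, no bounded \emph{linear} sections of the quotient maps $q_j:X\to X/M_j$ are needed, and their existence with norm controlled only by $p$ is doubtful in the quasi-Banach setting (this is precisely the kind of lifting/extension statement that fails without Hahn--Banach); the paper only needs a set-theoretic lifting of the \emph{finite} net, choosing for each class a representative of norm $<2^{1/p}$, which is immediate from the definition of the quotient quasi-norm. Second, the accumulated error lives in $Y$ and must be summed with the exponent $q$ of $Y$, not the exponent $p$ of $X$; the two exponents play genuinely different roles (cf.\ the terms $4^{q/p}$ and $\delta_{\ell-1}^q$ in Theorem \ref{thm:monster}).
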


Moreover, it was observed already in \cite{BBP,GBach} or \cite[Section 1.3.3]{ET} that Carl's inequality extends
easily to quasi-Banach spaces and Kolmogorov numbers or approximation numbers with only minor modifications necessary.
Consequently, \eqref{eq:CARL} is true also for quasi-Banach spaces with $s_k(T)$ standing again for any of
the scales of approximation, Gelfand, or Kolmogorov numbers, respectively. There is nevertheless one difference
between \eqref{eq:CARL} and \eqref{eq:orig1}. The constant $\gamma_\alpha>0$ in \eqref{eq:CARL} depends
indeed only on $\alpha>0$ and is universal for all pairs of Banach spaces $X$ and $Y$. But in \eqref{eq:orig1},
the constant $\gamma_\alpha>0$ may depend on the quasi-Banach spaces $X$ and $Y$. We give more details in Remark \ref{rem:const}.

We now explain the original proof of Carl's inequality \eqref{eq:orig1}
for Gelfand numbers in the case that $X$ and $Y$ are Banach spaces (cf. \cite{BC}, \cite[Theorem 3.1.1]{CaSt}, or \cite[Theorem 5.2]{Pisier}).
It will become clear, that this approach relies heavily on the Hahn-Banach Theorem applied to both  Banach spaces $X$ and $Y$.
As this fundamental technique fails for general quasi-Banach spaces \cite{K-HB}, this approach cannot be easily transferred.
We refer also to \cite{GK} and \cite{Kalt} for an overview of other results on quasi-Banach spaces and their geometry.

The proof proceeds in the following way.
First, \eqref{eq:orig1} is shown for approximation numbers $a_k(T)$ instead of Gelfand numbers. As noted before, this is easily extended to
the quasi-Banach case. 
Afterwards, an isometric embedding $j:Y \to \ell_\infty(S)$ for some set $S$ is used. Such an embedding exists for any Banach space $Y$ and can be
constructed with the Hahn-Banach theorem, e.g. with $S$ being the unit sphere or the unit ball in the dual space $Y^*$. Already such an isometric
embedding obviously does not exist if $Y$ is not a Banach space. Now, making use of the isometric embedding $j$, the following two properties of entropy and 
Gelfand numbers, namely
\begin{enumerate}
\item[(i)] $e_n(T)\le 2e_n(j\circ T)$ for every $n\in\N$, and 
\item[(ii)] $c_n(T)=a_n(j\circ T)$ for every $n\in\N$
\end{enumerate}
are essential. 
Equipped with these tools, \eqref{eq:orig1} then follows simply by
\begin{equation}\label{eq:orig2}
\sup_{1\le k\le n}k^{\alpha}e_k(T)\le 2 \sup_{1\le k\le n}k^{\alpha}e_k(j\circ T)\le 2\gamma_\alpha \sup_{1\le k\le n} k^\alpha a_k(j\circ T)
\le 2\gamma_\alpha \sup_{1\le k\le n} k^\alpha c_k(T).
\end{equation}
Let us comment on (i) and (ii) - and point out, why (ii) fails completely in the quasi-Banach case.

The proof of (i) is easy. Let $(j\circ T)(B_X)$ be covered by $2^{n-1}$ balls of radius $\varepsilon$ in $\ell_\infty(S)$.
Then (by just the triangle inequality) it can be covered by $2^{n-1}$ balls of radius $2\varepsilon$ in the same space with centers in $(j\circ T)(X).$
Finally, as $j$ is isometric, this can be translated into a covering of $T(B_X)$ by $2^{n-1}$ balls of radius $2\varepsilon$ in $Y$.

The proof of (ii) is more involved - and makes heavy use of the Hahn-Banach theorem. We will only comment on the more difficult inequality $a_n(j\circ T)\le c_n(T)$,
which was used in \eqref{eq:orig2}. The essential property of the space $\ell_\infty(S)$ here is the extension property: any linear bounded operator
$U:M \to \ell_\infty(S)$ from a closed linear subspace $M$ of a Banach space $X$ can be extended to an operator $\tilde{U}:X \to \ell_\infty(S)$ with 
$\|\tilde{U}\|=\|U\|$. Again, the proof of this extension property needs the Hahn-Banach theorem now for the Banach space $X$. So, this step is in general not possible if
$X$ is not a Banach space.

With the extension property the proof of (ii) is finished as follows. Given a subspace $M$ of $X$ with $\codim\, M<n$, we can extend  $U=j\circ T|_M$ to an
operator $\tilde{U}:X \to \ell_\infty(S)$ with $\|\tilde{U}\|=\|U\|=\|T|_M \|$ and, letting $L=j\circ T - \tilde{U}$ we conclude that ${\rm rank}(L)<n$ and
$a_n(j\circ T) \le \|j\circ T - L \| = \|\tilde{U}\| = \|T|_M \|$. Finally, (ii) follows by taking the infimum over all such $M$.
This discussion makes clear, that in this approach to Carl's inequality via the approximation numbers, the property that both $X$ and $Y$ are Banach spaces
(and not merely quasi-Banach spaces) is essential.

From the numerous applications of Carl's inequality available in the literature we will comment on its
role in the area of sparse recovery and compressed sensing \cite{CRT, D}.
It was observed already in the seminal paper of Donoho \cite{D} that lower bounds on the Gelfand numbers
of the embedding $id:\ell_p^N\to\ell_2^N$ with $0<p<1$ can be directly transferred into statements on optimality
of sparse recovery methods. When deriving these lower bounds, Donoho combined Carl's inequality with the known results
on entropy numbers of this embedding, overlooking that Carl's inequality is not available for Gelfand numbers and
quasi-Banach spaces. This flaw was corrected only later in \cite{FPRU} using completely different methods.
Using Theorem \ref{theo:main}, we give an alternative proof of the lower bound of $c_n(id:\ell_p^N\to\ell_2^N)$
in the last section.

The structure of the paper is as follows. In Section \ref{Sec:quasi} we collect some notation and basic facts about quasi-Banach spaces.
Section \ref{Sec:proof} gives the proof of our main result, Theorem \ref{theo:main}. 
With standard arguments, we derive the version of Carl's inequality for Lorentz norms, Theorem \ref{theo:Lorentz}. 
Finally, in Section \ref{CarlGelfand'} we show how to use
Theorem \ref{theo:main} to obtain lower estimates of Gelfand numbers of $id:\ell_p^N\to\ell_q^N$
with implications to optimal recovery of sparse vectors.

\section{Quasi-Banach spaces}\label{Sec:quasi}

This section collects some basic facts about quasi-Banach spaces. We restrict ourselves to the minimum needed later on
and refer to \cite{Kalt} and the references therein for an extensive overview. If $X$ is a (real) vector space, we say that
$\|\cdot\|_X:X\to [0,\infty)$ is a quasi-norm if
\begin{itemize}
\item[(i)] $\|x\|_X=0$ if, and only if $x=0$,
\item[(ii)] $\|\alpha x\|_X=|\alpha|\cdot \|x\|_X$ for all $\alpha\in\R$ and $x\in X$,
\item[(iii)] there is a constant $C\ge 1$, such that $\|x+y\|_X\le C(\|x\|_X+\|y\|_X)$ for all $x,y\in X$. 
\end{itemize}
By the fundamental Aoki-Rolewicz theorem \cite{Aoki,Rol}, every quasi-norm is equivalent to some $p$-norm, i.e. there exists
a mapping $\vertiii{\cdot}_X:X\to [0,\infty)$ and $0<p\le 1$, such that $\vertiii{\cdot}_X$ satisfies (i) and (ii) as above, (iii) gets replaced by
$\vertiii{x+y}_X^p\le \vertiii{x}_X^p+\vertiii{y}_X^p$ and $\vertiii{\cdot}_X$ is equivalent to $\|\cdot\|_X$ on $X$.
The expression $\vertiii{\cdot}_X$ is then called a $p$-norm and $X$ is a $p$-Banach space.
If a quasi-normed vector space $X$ is complete with respect to the metric induced by $\vertiii{\cdot}^p_X$, it is called a quasi-Banach space.
As the validity of Carl's inequality does not change if we replace the quasi-norms on $X$ and $Y$ by equivalent
quasi-norms, we shall always assume that $X$ and $Y$ are equipped with a $p$-norm and a $q$-norm, respectively.

\subsection{Quotients of quasi-Banach spaces}

If $X$ and $Y$ are quasi-Banach spaces and $T$ is a bounded linear operator between them,
we can still define Gelfand numbers $c_n(T)$ as before, as the notion of codimension is algebraic. Furthermore,
if $X$ is a $p$-Banach space and $M\subset X$ is a subspace, we can also define the quotient space $X/M$ and the usual definition
makes it again a $p$-Banach space. Indeed, let $[x],[y]\in X/M$. Then there are (for every $\varepsilon>0$) $z^x,z^y\in M$, such that
$$
\|x-z^x\|_X\le (1+\varepsilon)\|[x]\|_{X/M}\quad \text{and}\quad \|y-z^y\|_X\le (1+\varepsilon)\|[y]\|_{X/M}.
$$
We then obtain
\begin{align*}
\|[x+y]\|^p_{X/M}&\le\|x+y-z^x-z^y\|^p_X\le \|x-z^x\|_X^p+\|y-z^y\|_X^p\\
&\le (1+\varepsilon)^p(\|[x]\|_{X/M}^p+\|[y]\|_{X/M}^p)
\end{align*}
and the statement follows by letting $\varepsilon\to 0.$

\subsection{Entropy numbers of identity mappings}

We give an analogue of \cite[(12.1.13)]{Pietsch} for quasi-Banach spaces.
\begin{lemma} Let $X$ be a real $m$-dimensional $p$-Banach space, where $m\in\N$ and $0<p\le 1$. Then
\begin{equation}\label{eq:qeps1}
e_n(id:X\to X)\le 4^{1/p}2^{-\frac{n-1}{m}}
\end{equation}
for all $n\in \N.$
\end{lemma}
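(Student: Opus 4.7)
The plan is the standard volume-packing argument of \cite[(12.1.13)]{Pietsch}, adapted to track the extra factors forced by the $p$-triangle inequality. Fix $\varepsilon>0$, let $\{x_1,\ldots,x_N\}\subset B_X$ be maximal with $\|x_i-x_j\|_X\ge\varepsilon$ for $i\ne j$, and note that maximality forces $B_X\subset\bigcup_i(x_i+\varepsilon B_X)$. Hence as soon as $N\le 2^{n-1}$ we already obtain $e_n(\mathrm{id})\le\varepsilon$, and the task reduces to bounding $N$ from above and then choosing $\varepsilon$.

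For the packing bound, set $\delta=\varepsilon/2^{1/p}$. A single application of the $p$-norm inequality shows that the balls $x_i+\delta B_X$ are pairwise disjoint: a common point $y$ would yield $\|x_i-x_j\|_X^p\le \|x_i-y\|_X^p+\|y-x_j\|_X^p<2\delta^p=\varepsilon^p$, contradicting the separation. A second application of the $p$-triangle inequality gives
$$
B_X+\delta B_X\subset (1+\delta^p)^{1/p}B_X,
$$
so comparing $m$-dimensional Lebesgue (Haar) measures on the real vector space $X$ yields
$$
N\delta^m\le (1+\delta^p)^{m/p},\qquad\text{i.e.,}\qquad N\le\left(\frac{2}{\varepsilon^p}+1\right)^{m/p}.
$$

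I would then choose $\varepsilon=4^{1/p}\,2^{-(n-1)/m}$, so that $2/\varepsilon^p+1=2^{p(n-1)/m-1}+1\le 2^{p(n-1)/m}$ precisely when $p(n-1)/m\ge 1$, that is $n\ge 1+m/p$. In this regime the volume estimate gives $N\le 2^{n-1}$ and hence \eqref{eq:qeps1}. For the remaining range $n<1+m/p$ I would fall back on the trivial bound $e_n(\mathrm{id})\le\|\mathrm{id}\|=1$: here $(n-1)/m<1/p$, so $4^{1/p}\,2^{-(n-1)/m}>4^{1/p}\cdot 2^{-1/p}=2^{1/p}\ge 1$, and these two regimes together exhaust every $n\in\N$.

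The only real obstacle is the bookkeeping of the $p$-norm factors. In a Banach space one would take $\delta=\varepsilon/2$ and use $B_X+\delta B_X\subset(1+\delta)B_X$, yielding the classical constant $4$. For a $p$-norm each of these two steps pays a factor $2^{1/p}$ rather than $2$, and their product is exactly the constant $4^{1/p}=2^{2/p}$ appearing in \eqref{eq:qeps1}. Apart from carefully tracking these factors, the argument uses only the $p$-triangle inequality and the existence of Haar measure on a finite-dimensional real vector space.
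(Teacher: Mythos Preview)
Your proof is correct and follows essentially the same approach as the paper: a maximal $\varepsilon$-separated set in $B_X$, disjointness of the shrunk balls $x_i+(\varepsilon/2^{1/p})B_X$ via the $p$-triangle inequality, containment in $(1+\varepsilon^p/2)^{1/p}B_X$, and a volume comparison. The only cosmetic differences are that the paper chooses $\varepsilon$ implicitly so that the volume bound becomes an equality and then estimates this $\varepsilon$ from above, whereas you set $\varepsilon=4^{1/p}2^{-(n-1)/m}$ directly and verify the inequality $N\le 2^{n-1}$; correspondingly your trivial range is $(n-1)<m/p$ while the paper uses the slightly larger range $(n-1)\le 2m/p$.
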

\begin{proof}
The inequality $e_1(id:X\to X)\le 1$ holds also for quasi-Banach spaces. If $(n-1)\le 2m/p$, then $2^{\frac{n-1}{m}}\le 4^{1/p}$ and \eqref{eq:qeps1} follows.

We assume therefore that $(n-1)>2m/p.$ We choose $\varepsilon>0$ by 
\begin{equation*}
\left[\frac{(1+\varepsilon^p/2)^{1/p}}{\varepsilon/2^{1/p}}\right]^m=2^{n-1},\quad \text{i.e.}\quad
\varepsilon=\left[\frac{2}{2^{\frac{p(n-1)}{m}}-1}\right]^{1/p}<1.
\end{equation*}
Let now $x_1,\dots,x_{N}\in B_X$
be a maximal subset of $B_X$ with mutual distances $\|x_i-x_j\|_X\ge \varepsilon.$
Then $B_X$ can be covered by the balls $x_i+\varepsilon B_X$ and the balls $x_i+\frac{\varepsilon}{2^{1/p}}B_X$
are mutually disjoint. Indeed, if there would be a $z\in X$ with $\|x_i-z\|_X< \varepsilon/2^{1/p}$
and $\|x_j-z\|_X< \varepsilon/2^{1/p}$, then $\|x_i-x_j\|^p_X\le \|x_i-z\|^p_X+\|x_j-z\|^p_X<\varepsilon^p.$
Furthermore, if $y\in x_i+\frac{\varepsilon}{2^{1/p}}B_X$, then $y=x_i+z$ with $\|z\|_X<\frac{\varepsilon}{2^{1/p}}$
and
$$
\|y\|_X^p\le\|x_i\|_X^p+\|z\|_X^p< 1+\varepsilon^p/2.
$$
Hence, $x_i+\frac{\varepsilon}{2^{1/p}}B_X$ are mutually disjoint, all included in $(1+\varepsilon^p/2)^{1/p}B_X.$
Comparing the volumes (with respect to any translation invariant normalized measure on $X$), we get
$$
N\cdot \Bigl(\frac{\varepsilon}{2^{1/p}}\Bigr)^m\le (1+\varepsilon^p/2)^{m/p}, \quad \text{i.e.}\quad
N\le \left[\frac{(1+\varepsilon^p/2)^{1/p}}{\varepsilon/2^{1/p}}\right]^m=2^{n-1}.
$$
We therefore obtain that
$$
e_n(id:X\to X)\le \varepsilon=\left[\frac{2}{2^{\frac{p(n-1)}{m}}-1}\right]^{1/p}\le \left[4\cdot 2^{-\frac{p(n-1)}{m}}\right]^{1/p}
$$
and \eqref{eq:qeps1} follows again.

\end{proof}

\section{Proof of Carl's inequality}\label{Sec:proof}

In this section we prove our main result, Theorem \ref{theo:main}, as well as its Lorentz space counterpart,
Theorem \ref{theo:Lorentz}.

\subsection{Proof of Theorem \ref{theo:main}}

Let $X$ be a $p$-Banach space and let $Y$ be a $q$-Banach space.
We fix a sequence $(M_n)_{n\in\N}$ of finite codimensional subspaces of $X$ and let $\tau_n = \big\|T|_{M_n}\big\|$.
We also fix a sequence $(\eps_n)_{n\in\N}$ of positive numbers  with $\eps_n\le 1$.
Let $\mathcal{M}_n\subset X/M_n$ 
be an $\eps_n$-net of the unit ball of $X/M_n$, i.e. for any $[x]\in B_{X/M_n}$ 
there exist $[x_n]\in\mathcal{M}_n$ such that
\begin{equation}\label{eq:quotient1}
\big\|[x]-[x_n]\big\|_{X/M_n}=\inf\limits_{z\in M_n}\|x-x_n-z\|_X<\eps_n.
\end{equation}
As a byproduct of \eqref{eq:quotient1}, we also get
$$
\|[x_n]\|^p_{X/M_n}\le \|[x_n]-[x]\|^p_{X/M_n}+\|[x]\|^p_{X/M_n}<\eps_n^p+1\le 2,
$$
hence $\mathcal{M}_n\subset 2^{1/p}B_{X/M_n}$.
As $\|[x_n]\|_{X/M_n}<2^{1/p}$, we may assume (just by the definition of the quotient quasi-norm) that $x_n\in X$ was chosen with $\|x_n\|_X<2^{1/p}$.
Let $\cN_n\subset 2^{1/p}B_X$ be a lifting of $\mathcal{M}_n$, which collects for each class $[x_n]\in \mathcal{M}_n$
the representative with quasi-norm smaller than $2^{1/p}.$
Using \eqref{eq:quotient1}, we see that for any $x\in B_X$ there exist $x_n\in\cN_n$ and $z_n\in M_n$ with
$$
\| x - (x_n+z_n) \|_X <\eps_n.
$$
Finally, let $\delta_0=1$ and $$ \delta_n = \prod_{j=1}^n \eps_j \qquad \text{for } n\in\N.$$

The proof of Theorem \ref{theo:main} relies on an iterative construction. The single steps are
based on the lifting just described and the details are given in Lemma \ref{lem:qchoice}.
Its inductive use is then the subject of Lemma \ref{lem:qinduction}.

\begin{lemma}\label{lem:qchoice}
 For any $x\in X$ with $\|x\|_X < \delta$ for some $0<\delta\le 1$ there exist $x_n \in \cN_n$ and $z_n\in M_n$ such that
 $$ \|z_n\|_X < 4^{1/p} \qquad \text{and} \qquad \| x - \delta(x_n+z_n) \|_X < \delta\cdot\eps_n.$$
\end{lemma}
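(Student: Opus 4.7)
The plan is to reduce to the case $\|x\|_X<1$ that was already handled in the paragraph just before the lemma, by using homogeneity, and then to extract the norm bound on $z_n$ using two applications of the $p$-norm triangle inequality.

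More concretely, starting from $x$ with $\|x\|_X<\delta\le 1$, I would scale and set $\tilde x = x/\delta$, so that $\|\tilde x\|_X<1$ and hence $\tilde x\in B_X$. Applying the construction preceding the lemma to $\tilde x$, I obtain $x_n\in\cN_n$ and $z_n\in M_n$ such that
\begin{equation*}
\|\tilde x-(x_n+z_n)\|_X<\eps_n.
\end{equation*}
Multiplying through by $\delta$ immediately yields the approximation statement $\|x-\delta(x_n+z_n)\|_X<\delta\eps_n$. So the only real content of the lemma is the bound $\|z_n\|_X<4^{1/p}$, and this bound is the same for every choice of $\delta$ in $(0,1]$, which is precisely why the scaling trick works.

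To bound $\|z_n\|_X$, I would write $z_n=(x_n+z_n)-x_n$ and apply the $p$-norm triangle inequality:
\begin{equation*}
\|z_n\|_X^p\le \|x_n+z_n\|_X^p+\|x_n\|_X^p.
\end{equation*}
The second summand is $<2$ by construction of the lifting $\cN_n$, since every representative there has quasi-norm $<2^{1/p}$. For the first summand, one more $p$-norm triangle inequality gives
\begin{equation*}
\|x_n+z_n\|_X^p\le \|x_n+z_n-\tilde x\|_X^p+\|\tilde x\|_X^p<\eps_n^p+1\le 2,
\end{equation*}
using $\eps_n\le 1$ and $\|\tilde x\|_X<1$. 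Combining yields $\|z_n\|_X^p<4$, i.e.\ $\|z_n\|_X<4^{1/p}$.

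I do not expect a serious obstacle here: this is essentially a bookkeeping lemma that repackages the $\eps_n$-net in the quotient $X/M_n$ into a statement about $X$ itself, with explicit control of the lifting error. The one subtle point worth being careful about is that the bound $\|z_n\|_X<4^{1/p}$ is independent of $\delta$ (whereas the approximation error scales with $\delta$), which is exactly the feature that will make the iterative construction in Lemma \ref{lem:qinduction} telescope to a geometric estimate for the entropy numbers.
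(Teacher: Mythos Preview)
Your proof is correct and follows essentially the same approach as the paper: scale by $1/\delta$, invoke the net construction for the unit ball, and then bound $\|z_n\|_X$ via the $p$-triangle inequality. The only cosmetic difference is that the paper bounds $\|z_n\|_X^p$ in a single three-term application of the $p$-triangle inequality, $\|z_n\|_X^p \le \|x/\delta-(x_n+z_n)\|_X^p+\|x/\delta\|_X^p+\|x_n\|_X^p<4$, whereas you split this into two two-term steps.
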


\begin{proof}
 Since $\|x/\delta\|_X < 1$, we find $x_n \in \cN_n$ and $z_n\in M_n$ such that
 $$ \| x/\delta - (x_n+z_n) \|_X < \eps_n.$$ This shows the second inequality. The bound on $z_n$ follows from the $p$-triangle inequality
 $$ \|z_n\|^p_X \le \| x/\delta - (x_n+z_n) \|^p_X + \| x/\delta \|_X^p +\|x_n\|_X^p< 4.$$
\end{proof}

\begin{lemma}\label{lem:qinduction}
 For any $x\in B_X$, there exist sequences $(x_n)_{n\in\N},(z_n)_{n\in\N}$ with $x_n \in \cN_n$ and $z_n \in M_n$ such that
 \begin{enumerate}
	 \item[(i)]   $\|z_n\|_X < 4^{1/p}$ for $n\in\N$,
	 \item[(ii)]  $\left\| x - \sum_{k=1}^n \delta_{k-1} (x_k+z_k) \right\|_X < \delta_n$ for $n\in\N$,
	 \item[(iii)] $\left\| Tx - \sum_{k=1}^n \delta_{k-1} T x_k \right\|^q_Y \le (\|T\| \delta_n)^q + 4^{q/p} \sum_{k=1}^n \delta^q_{k-1} \tau^q_k$ for $n\in\N$.
 \end{enumerate}
\end{lemma}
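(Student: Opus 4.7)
The plan is to argue by induction on $n$, using Lemma \ref{lem:qchoice} to extend the decomposition one step at a time. For the base case $n=1$, I apply Lemma \ref{lem:qchoice} to $x \in B_X$ with $\delta = \delta_0 = 1$; this directly yields $x_1 \in \cN_1$, $z_1 \in M_1$ satisfying (i), and the bound $\|x-\delta_0(x_1+z_1)\|_X<\delta_0\eps_1=\delta_1$, which is (ii). For (iii) at $n=1$, I write $Tx-\delta_0 Tx_1 = T(x-\delta_0(x_1+z_1))+\delta_0 Tz_1$ and use the $q$-triangle inequality in $Y$, together with $\|Tz_1\|_Y\le \tau_1\|z_1\|_X<4^{1/p}\tau_1$ (since $z_1\in M_1$) and $\|T(x-\delta_0(x_1+z_1))\|_Y<\|T\|\delta_1$.

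For the inductive step, assume (i)--(iii) hold for $n$, and set $r_n=x-\sum_{k=1}^n\delta_{k-1}(x_k+z_k)$, so by (ii) we have $\|r_n\|_X<\delta_n$. Since $\eps_j\le 1$ for all $j$, we have $\delta_n\le 1$, so Lemma \ref{lem:qchoice} applies to $r_n$ with $\delta=\delta_n$ and delivers $x_{n+1}\in\cN_{n+1}$ and $z_{n+1}\in M_{n+1}$ satisfying $\|z_{n+1}\|_X<4^{1/p}$ and
$$\|r_n-\delta_n(x_{n+1}+z_{n+1})\|_X<\delta_n\eps_{n+1}=\delta_{n+1}.$$
This is exactly (i) and (ii) at step $n+1$, since the left hand side equals $\|x-\sum_{k=1}^{n+1}\delta_{k-1}(x_k+z_k)\|_X =: \|r_{n+1}\|_X$.

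For (iii) at step $n+1$, the clean approach is to rewrite the full remainder in one shot: from $x = r_{n+1}+\sum_{k=1}^{n+1}\delta_{k-1}(x_k+z_k)$ one obtains
$$Tx-\sum_{k=1}^{n+1}\delta_{k-1}Tx_k = Tr_{n+1}+\sum_{k=1}^{n+1}\delta_{k-1}Tz_k.$$
Applying the $q$-triangle inequality in $Y$, together with $\|Tr_{n+1}\|_Y\le\|T\|\cdot\|r_{n+1}\|_X<\|T\|\delta_{n+1}$ and $\|Tz_k\|_Y\le \tau_k\|z_k\|_X<4^{1/p}\tau_k$ (exploiting $z_k\in M_k$), yields the desired bound
$$\Big\|Tx-\sum_{k=1}^{n+1}\delta_{k-1}Tx_k\Big\|_Y^q < (\|T\|\delta_{n+1})^q+4^{q/p}\sum_{k=1}^{n+1}\delta_{k-1}^q\tau_k^q.$$

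The only real obstacle is keeping the bookkeeping straight: one must be careful that $\delta_n\le 1$ to invoke Lemma \ref{lem:qchoice}, and the crucial structural point is to bundle all the correction terms $z_k$ into one residual expression before applying the $q$-triangle inequality, rather than iterating a $q$-triangle estimate step by step (which would accumulate extra constants). Because $Y$ is a $q$-Banach space, the aggregate $q$-triangle inequality gives precisely the sum-of-$q$-th-powers structure appearing in (iii) without any further loss.
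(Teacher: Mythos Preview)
Your proof is correct and follows essentially the same approach as the paper: construct $(x_n)$ and $(z_n)$ inductively via Lemma~\ref{lem:qchoice} to obtain (i) and (ii), then derive (iii) by rewriting $Tx-\sum_{k=1}^n\delta_{k-1}Tx_k$ as $T\bigl(x-\sum_{k=1}^n\delta_{k-1}(x_k+z_k)\bigr)+\sum_{k=1}^n\delta_{k-1}Tz_k$ and applying the $q$-triangle inequality once. Your version is more explicit (in particular the remark that $\delta_n\le 1$ is needed to invoke Lemma~\ref{lem:qchoice}, and the observation that (iii) should be obtained from a single aggregate $q$-triangle inequality rather than an iterated one), but the argument is the same.
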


\begin{proof}
 The existence of sequences $(x_n)_{n\in\N},(z_n)_{n\in\N}$ with $x_n \in \cN_n$ and $z_n \in M_n$ satisfying $(i)$ and $(ii)$ follows inductively from Lemma \ref{lem:qchoice}.
 It remains to prove $(iii)$. 
 We use the $q$-triangle inequality, $(i)$,$(ii)$ and $\tau_n = \big\|T|_{M_n}\big\|$ to conclude that
\begin{align*}
   \displaystyle \left\| Tx - \sum_{k=1}^n \delta_{k-1} T x_k \right\|^q_Y 
	  &\le  \left\| Tx - \sum_{k=1}^n \delta_{k-1} T (x_k+z_k) \right\|^q_Y + \sum_{k=1}^n \delta^q_{k-1} \| T z_k\|^q_Y \\
    &\le \big(\|T\| \delta_n\big)^q + 4^{q/p} \sum_{k=1}^n \delta^q_{k-1} \tau^q_k.
 \end{align*}
\end{proof}

The next theorem now follows using the optimal subspaces $M_n$ and the inequality \eqref{eq:qeps1} for entropy numbers for the $m_n$-dimensional $p$-Banach space $X/M_{m_n}$.

\begin{theo}\label{thm:monster}
Let $T:X\to Y$ be a bounded linear operator from the $p$-Banach space $X$ to the $q$-Banach space $Y$, where $0<p,q\le 1$.
Let $(k_n)_{n\in\N}$ and $(m_n)_{n\in\N}$ be sequences of positive integers. Then
$$ e_{k_1+\dots+k_n+1-n} (T)^q \le  2^{2nq/p-\sum_{j=1}^n \frac{k_j-1}{m_j}\cdot q} \|T\|^q +
4^{q/p} \sum_{\ell=1}^n  2^{2(\ell-1) q/p-\sum_{j=1}^{\ell-1} \frac{k_j-1}{m_j}\cdot q} c_{m_\ell+1} (T)^q. $$
\end{theo}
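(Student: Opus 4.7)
The plan is to feed Lemma \ref{lem:qinduction} with optimal choices of subspaces and mesh sizes, and then to read the resulting approximation as a covering of $T(B_X)$. The two ingredients are the definition of Gelfand numbers (for choosing the $M_\ell$) and the entropy estimate \eqref{eq:qeps1} for finite-dimensional $p$-Banach spaces (for choosing the $\eps_\ell$).

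First, fix $\eta>0$ and, for each $\ell\in\N$, pick a subspace $M_\ell\subset X$ with $\codim\, M_\ell\le m_\ell$ and $\tau_\ell=\big\|T|_{M_\ell}\big\|\le c_{m_\ell+1}(T)+\eta$; this is possible by the very definition of $c_{m_\ell+1}(T)$. The quotient $X/M_\ell$ is then a $p$-Banach space of real dimension at most $m_\ell$, so the entropy bound \eqref{eq:qeps1} applied to $\mathrm{id}:X/M_\ell\to X/M_\ell$ yields an $\eps_\ell$-net $\mathcal{M}_\ell$ of $B_{X/M_\ell}$ of cardinality at most $2^{k_\ell-1}$ with $\eps_\ell\le 4^{1/p}2^{-(k_\ell-1)/m_\ell}$. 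Since Lemma \ref{lem:qchoice} requires $\eps_\ell\le 1$, I would replace $\eps_\ell$ by $\min(\eps_\ell,1)$ (taking the single-ball cover in the trivial case), which preserves $\eps_\ell\le 4^{1/p}2^{-(k_\ell-1)/m_\ell}$. Consequently,
$$\delta_n^q=\prod_{j=1}^n\eps_j^q\le 2^{2nq/p-\sum_{j=1}^n\frac{k_j-1}{m_j}q},\qquad \delta_{\ell-1}^q\le 2^{2(\ell-1)q/p-\sum_{j=1}^{\ell-1}\frac{k_j-1}{m_j}q}.$$

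Next, the liftings $\cN_\ell\subset 2^{1/p}B_X$ are built exactly as described just before Lemma \ref{lem:qchoice}, so Lemma \ref{lem:qinduction} applies. For each $x\in B_X$ it produces sequences $(x_k)$ with $x_k\in\cN_k$ such that the approximant $\sum_{k=1}^n\delta_{k-1}Tx_k$ lies within $Y$-distance $R$ of $Tx$, where $R^q$ is the right-hand side of assertion (iii). The key combinatorial observation is that as $x$ varies over $B_X$, this approximant ranges over a set of cardinality at most $\prod_{k=1}^n|\cN_k|\le 2^{k_1+\cdots+k_n-n}=2^{N-1}$ with $N:=k_1+\cdots+k_n+1-n$. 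Hence $T(B_X)$ is covered by $2^{N-1}$ balls of radius $R$ in $Y$, which by definition gives $e_N(T)^q\le R^q$. Substituting the bounds on $\delta_n^q$ and $\delta_{\ell-1}^q$ together with $\tau_\ell\le c_{m_\ell+1}(T)+\eta$, and letting $\eta\to 0$, delivers the claimed inequality.

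The main obstacle is not in any single calculation but in aligning the bookkeeping: the index shift in the net sizes $2^{k_\ell-1}$, the truncation $\min(\eps_\ell,1)$ (needed both so that the net trivially exists when the entropy bound is vacuous and so that Lemma \ref{lem:qchoice} is applicable), and the limit $\eta\to 0$ used to pass from $\tau_\ell$ to $c_{m_\ell+1}(T)$. Once these are handled, the theorem is a direct substitution of the geometric bounds on $\delta_n$ into Lemma \ref{lem:qinduction}(iii).
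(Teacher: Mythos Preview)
Your proof is correct and follows essentially the same route as the paper: choose near-optimal subspaces for the Gelfand numbers, use \eqref{eq:qeps1} to size the nets in the quotients, feed everything into Lemma~\ref{lem:qinduction}(iii), and count the approximants to bound the entropy number. Your explicit truncation $\eps_\ell\mapsto\min(\eps_\ell,1)$ and the $\eta\to 0$ passage are in fact slightly more careful than the paper's version, which simply sets $\eps_j=4^{1/p}2^{-(k_j-1)/m_j}$ and takes the infimum over subspaces at the end.
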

\begin{proof}
We apply Lemma \ref{lem:qinduction} to a sequence $ (M_{m_j})_{j\in\N}$
of finite codimensional subspaces of $X$ with $\codim M_{m_j} = m_j$.
Moreover, by \eqref{eq:qeps1}, we may and do choose the $\eps_{m_j}$-nets 
$\mathcal{M}_{m_j}\subset 2^{1/p} B_{X/M_{m_j}}$ and the lifting 
$\mathcal{N}_{m_j}\subset 2^{1/p} B_X$ such that
$$
	\eps_j=4^{1/p}2^{-\frac{k_j-1}{m_j}} 
	\qquad \text{and} \qquad
	\# \mathcal{M}_{m_j} = \# \mathcal{N}_{m_j} \le 2^{k_j-1}.
$$	
We then have
\begin{equation}\label{eq:delta} 
\delta_k=\prod\limits_{j=1}^{k}\eps_{m_j}=2^{2k/p-\sum\limits_{j=1}^{k}\frac{k_j-1}{m_j}}\qquad \text{for } k\in\N.
\end{equation}
For the net 
$$ {\mathcal N} = \left\{ T \Bigl( \sum_{j=1}^n \delta_{j-1} x_j \Bigr) \,:\, x_j \in {\mathcal N}_{m_j}\right\}$$
we derive
\begin{equation} \label{eq:card} 
\# {\mathcal N} \le \prod_{j=1}^n \# {\mathcal N}_{m_j} \le 2^{\sum_{j=1}^n k_j - n}.
\end{equation}
Now $(iii)$ in Lemma \ref{lem:qinduction} shows that for any $x\in B_X$ there exists $y\in {\mathcal N}$ such that
$$ \|Tx-y\|_Y^q \le \big(\|T\| \delta_n\big)^q + 4^{q/p} \sum_{\ell=1}^n \delta^q_{\ell-1} \tau^q_{m_\ell}. $$
Hence, using \eqref{eq:delta} and \eqref{eq:card}, we obtain
	$$
	e_{k_1+\dots+k_n+1-n} (T)^q \le  2^{2nq/p-\sum_{j=1}^n \frac{k_j-1}{m_j}\cdot q} \|T\|^q +
	4^{q/p} \sum_{\ell=1}^n  2^{2(\ell-1) q/p-\sum_{j=1}^{\ell-1} \frac{k_j-1}{m_j}\cdot q} \tau_{m_\ell}^q
	$$
	and the claim follows by taking the infimum over all sequences of subspaces $(M_{m_j})_{j=1}^n$ with $\codim M_{m_j} \le m_j$.
\end{proof}

We are now ready to prove Theorem \ref{theo:main}. It is enough to show
$$
n^{\alpha}e_n(T)\le\gamma_\alpha \sup_{1\le k\le n}k^{\alpha}c_k(T)
$$
for every $n\in\N$ and some constant $\gamma_\alpha$.
By homogeneity, we may assume that $c_k(T)\le k^{-\alpha}$ for $1\le k\le n$, in particular $c_1(T)=\|T\|\le1$. By monotonicity, it is also enough to prove the statement
for $n=C\,2^N$, where $N\in \N$ and $C$ is a universal natural number. Choose $\beta>\alpha>0$, $m_j=2^{N-j}$, $j=1,\dots, N$, and
$$
k_j=\lceil2^{N-j} (2/p+\beta)+1\rceil,\quad j=1,\dots,N,
$$
where $\lceil a\rceil$ is the smallest integer not less than $a$.

Then $(k_j-1)/m_j\ge 2/p+\beta$ and $\varepsilon_j:=4^{1/p}2^{-\frac{k_j-1}{m_j}}\le 2^{-\beta}.$
By Theorem \ref{thm:monster} we get
\begin{align*}
e_{k_1+\dots+k_N+1-N}(T)^q&\le 2^{-\beta Nq}+4^{q/p}\sum_{l=1}^N 2^{-\beta (l-1)q} (2^{N-l}+1)^{-\alpha q}\\
&\le 2^{-\beta Nq}+4^{q/p}2^{-\alpha Nq}2^{\beta q}\sum_{l=1}^N 2^{l(\alpha-\beta) q}\le \gamma_{\alpha,\beta}2^{-N\alpha q}.
\end{align*}
Furthermore, let $C\ge 1$ be a natural number with $C\ge 2/p+\beta+1.$ Then
\begin{align*}
1-N+\sum_{j=1}^N k_j&\le 1-N+\sum_{j=1}^N [2^{N-j}(2/p+\beta)+2]=1+N+(2/p+\beta)\sum_{j=1}^N2^{N-j}\\
&\le 1+N+(2/p+\beta)2^N\le C2^N.
\end{align*}
Putting these estimates together, we obtain
$$
e_{C2^N}(T)^q\le \gamma_{\alpha,\beta} 2^{-N\alpha q}\le c'(C2^N)^{-\alpha q},
$$
which gives the desired statement.

\begin{remark}\label{rem:const}
A detailed inspection of the proof allows to estimate the dependence of $\gamma_{\alpha}$ in \eqref{eq:orig1} on $\alpha>0$ and $0<p,q\le 1$, leading to
\begin{equation}\label{eq:gamma}
\gamma_\alpha \le 2^{2\alpha+3/q+2/p}(2/p+\alpha+1/q+2)^\alpha.
\end{equation}
In contrary to \eqref{eq:CARL}, the constant $\gamma_\alpha>0$ now depends also on the $p$-Banach space $X$ and $q$-Banach space $Y$.
Let us remark, that we did not make any efforts to optimize \eqref{eq:gamma} and its optimality is left open.
\end{remark}

\subsection{Lorentz space version}

Using standard techniques, cf. \cite[Theorem 3.1.2]{CaSt}, Carl's inequality can be easily extended to compare the Lorentz quasi-norms
of $(e_k(T))_{k\in\N}$ and $(c_k(T))_{k\in \N}$.
\begin{theo}\label{theo:Lorentz}
Let $X$ and $Y$ be quasi-Banach spaces and let $T:X\to Y$ be a bounded linear operator. Then for every $0<s\le \infty$
and every $0<t<\infty$ there exists a constant $\gamma_{s,t}$ such that for every $m\in\N$
\begin{equation}\label{eq:lorentz1}
\left(\sum_{k=1}^m k^{t/s-1}e_k(T)^t\right)^{1/t}\le \gamma_{s,t}\left(\sum_{k=1}^m k^{t/s-1}c_k(T)^t\right)^{1/t}.
\end{equation}
\end{theo}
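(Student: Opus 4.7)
The plan is to deduce the Lorentz-norm inequality \eqref{eq:lorentz1} from the sup-norm version in Theorem \ref{theo:main} by a dyadic blocking argument in the index $k$, combined with a Hardy-type exchange of summation. The key observation is that for any non-increasing non-negative sequence $(a_k)_{k\in\N}$ and any $s,t>0$, one has the standard dyadic equivalence
$$\sum_{k=1}^{m} k^{t/s-1}a_k^t \;\asymp\; \sum_{j=0}^{J} 2^{jt/s}\,a_{2^j}^t,\qquad J=\lceil\log_2 m\rceil,$$
with constants depending only on $s$ and $t$. Applied to both $a_k=e_k(T)$ and $a_k=c_k(T)$, this reduces the problem to proving the corresponding inequality for the dyadic subsequences $(e_{2^j}(T))$ and $(c_{2^j}(T))$.

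Next, I fix any $\alpha>1/s$ (with the convention $1/s=0$ when $s=\infty$, so any $\alpha>0$ works in that case). Applying Theorem \ref{theo:main} at scale $n=2^j$ and then using monotonicity of $c_k(T)$ to restrict attention to powers of two at the cost of a factor $2^\alpha$ gives
$$2^{j\alpha}e_{2^j}(T)\;\le\;\gamma_\alpha\sup_{1\le k\le 2^j}k^\alpha c_k(T)\;\le\;2^\alpha\gamma_\alpha\max_{0\le i\le j}2^{i\alpha}c_{2^i}(T).$$

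The third step is the Hardy-type manipulation. Raising to the $t$-th power and using the crude bound $\max_i b_i\le(\sum_i b_i^t)^{1/t}$ yields $2^{j\alpha t}e_{2^j}(T)^t\le C_\alpha^t\sum_{i=0}^{j}2^{i\alpha t}c_{2^i}(T)^t$. Multiplying by $2^{jt(1/s-\alpha)}$, summing over $0\le j\le J$, and swapping the order of summation yields
$$\sum_{j=0}^{J}2^{jt/s}e_{2^j}(T)^t\;\le\;C_\alpha^t\sum_{i=0}^{J}2^{i\alpha t}c_{2^i}(T)^t\sum_{j=i}^{J}2^{jt(1/s-\alpha)}.$$
Since $\alpha>1/s$, the inner geometric sum is dominated by a constant multiple of $2^{it(1/s-\alpha)}$, which cancels the factor $2^{i\alpha t}$ to produce exactly $2^{it/s}$. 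Combining with the dyadic equivalence from the first step for $c_k(T)$ closes the argument and yields \eqref{eq:lorentz1}.

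I do not expect a serious obstacle here: the argument is indeed ``standard techniques'' as the authors indicate. The only two points requiring a little care are that $\alpha$ must be chosen \emph{strictly} larger than $1/s$ so that the geometric series in the exchange-of-sums step converges, and that the dependence of $\gamma_{s,t}$ on the quasi-norm parameters $p,q$ is inherited through $\gamma_\alpha$ from Theorem \ref{theo:main}; thus the constant in \eqref{eq:lorentz1} may depend on the spaces $X$ and $Y$ as well, consistently with Remark \ref{rem:const}.
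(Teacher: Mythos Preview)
Your proposal is correct and follows essentially the same strategy as the paper: apply Theorem~\ref{theo:main} with some $\alpha>1/s$, insert the resulting sup-bound into the weighted sum, and finish with a Hardy-type argument. The only cosmetic difference is that the paper carries out the last step by quoting Hardy's inequality directly (after the estimate $\sup_{l\le k}l^\alpha c_l(T)\le(\sum_{j\le k}c_j(T)^{1/\alpha})^\alpha$, with $\alpha>\max(1/s,1/t)$), whereas you reprove the same Hardy-type bound via dyadic blocking and an explicit geometric series.
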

\begin{proof} Let $\alpha>\max(1/s,1/t)$ be fixed. By Theorem \ref{theo:main} and Hardy's inequality \cite[Lemma 1.5.3]{CaSt} we get
\begin{align*}
\sum_{k=1}^m k^{t/s-1}e_k(T)^t&=\sum_{k=1}^m k^{t/s-1-\alpha t}(k^{\alpha}e_k(T))^t\le\sum_{k=1}^m k^{t/s-1-\alpha t}(\sup_{1\le l\le k}l^{\alpha}e_l(T))^t\\
&\le \gamma_{\alpha}\sum_{k=1}^m k^{t/s-1-\alpha t}(\sup_{1\le l \le k}l^{\alpha}c_l(T))^t\\
&\le \gamma_{\alpha}\sum_{k=1}^m k^{t/s-1-\alpha t}\Bigl(\sup_{1\le l \le k}\Bigl(\sum_{j=1}^l c_j(T)^{1/\alpha}\Bigr)^{\alpha}\Bigr)^t\\
&= \gamma_{\alpha}\sum_{k=1}^m k^{t/s-1-\alpha t}\Bigl(\sum_{j=1}^k c_j(T)^{1/\alpha}\Bigr)^{\alpha t}= \gamma_{\alpha}\sum_{k=1}^m k^{t/s-1}\Bigl(\frac{1}{k}\sum_{j=1}^k c_j(T)^{1/\alpha}\Bigr)^{\alpha t}\\
&\le \gamma_{\alpha,s,t}\sum_{k=1}^m k^{t/s-1}c_k(T)^t.
\end{align*}

\end{proof}

\section{Applications to optimality of sparse recovery}\label{CarlGelfand'}

Recently, the $s$-numbers were used in the area of compressed sensing \cite{CRT, D}, cf. also \cite{BCKV,FR}, to provide general lower bounds
for the performance of sparse recovery methods. In its basic setting, compressed sensing studies pairs $(A,\Delta)$
of linear measurement maps $A\in\R^{n\times N}$ and (non-linear) recovery maps $\Delta:\R^n\to\R^N$,
such that the recovery error $x-\Delta(Ax)$ is small (or even null) for $k$-sparse vectors $x\in\Sigma_k=\{x\in\R^N:\#\{i:x_i\not=0\}\le k\}$.
To allow for stability needed in applications, it is also necessary that the methods of compressed sensing
are extendable to compressible vectors, i.e. to vectors which can be very well approximated by sparse vectors.
The performance of a pair $(A,\Delta)$ in recovery of vectors from some set $K\subset \R^N$ is measured in the worst case by
$$
\varepsilon(A,\Delta,K,Y)=\sup_{x\in K}\|x-\Delta(A x)\|_Y,
$$
where $Y$ is a (quasi-)norm on $\R^N.$ The search for the optimal recovery pair is then expressed in the so-called
compressive $n$-widths
$$
E^n(K,Y)=\inf\Bigl\{\varepsilon(A,\Delta,K,Y):A\in\R^{n\times N},\Delta:\R^n\to\R^N\Bigr\}.
$$
Based on previous work in approximation theory and information based complexity \cite{M,N,Pinkus2}
it was observed in \cite{CDD, D,KT} that the compressive $n$-widths of a symmetric and subadditive set $K$ (i.e. a set $K$ with $K=-K$ and $K+K\subset aK$ for some $a>0$)
are equivalent to Gelfand numbers of $K$, which are defined as
$$
c_n(K)=\inf_{\substack{M\subset \R^{N}\\\codim\, M<n}}\sup_{x\in K\cap M}\|x\|_Y.
$$
Here, the infimum is taken over all linear subspaces $M$ of $\R^N$ with codimension smaller than $n$.
Any lower bound on Gelfand numbers of $K$ therefore immediately translates
into lower bounds on recovery errors of vectors from $K$. Especially, if an algorithm achieves the same recovery rate as
the corresponding lower bound obtained by estimates of Gelfand numbers, we know that this algorithm is asymptotically optimal.

In the frame of compressed sensing, the unit balls of $\ell_p^N$ for $0<p\le 1$ are typically used as a good model for
compressible vectors and the error of recovery is mostly measured in the Euclidean norm of $\ell_2^N$. Consequently,
Donoho \cite{D} investigated the decay of $E^n(B_p^N,\ell^N_2)$ and, consequently, the decay of Gelfand numbers of $B_p^N$ in $\ell_2^N$,
which will be denoted by $c_n(id:\ell_p^N\to \ell_2^N)$ later on.
The first estimates of these quantities for $p=1$ were obtained by Garnaev, Gluskin, and Kashin \cite{GG,G,Kashin}.
For $p<1$, the estimate from above appeared first in \cite{D} and using the approach of \cite{LGM}
it was proved also in \cite{Vyb}.

By applying \eqref{eq:CARL} to $T=id:\ell_p^N\to\ell_2^N$ and using the known results (cf. \cite{GL,Thomas,S} or \cite[Section 3.2.2]{ET}) on entropy numbers $e_n(id:\ell_p^N\to\ell_2^N)$,
Donoho obtained a lower bound for $c_n(id:\ell_p^N\to \ell_2^N)$ and, consequently, also for $E^n(B_p^N,\ell_2^N)$. The results obtained can be summarized as
\begin{equation}\label{eq:Gelfand}
c_p\min\Bigl\{1, \frac{1+\log(N/n)}{n}\Bigr\}^{1/p-1/2}\le c_n(id:\ell_p^N\to\ell_2^N)\le C_p\min\Bigl\{1, \frac{1+\log(N/n)}{n}\Bigr\}^{1/p-1/2},
\end{equation}
where $1\le n\le N$ are natural numbers and the positive numbers $c_p,C_p$ do not depend on $n$ and $N$.
The use of \eqref{eq:CARL} in the proof of the lower estimate of \eqref{eq:Gelfand} appeared for the first time in \cite{CP}
and we give a sketch of this argument in Section \ref{CarlGelfand} for readers convenience.

Unfortunately, the argument just presented contains one crucial flaw, which was overlooked by Donoho. Carl's inequality \eqref{eq:CARL}
is proven in \cite{BC} only when $X$ and $Y$ are Banach spaces. This gap was observed by H. Rauhut and T. Ullrich and the lower bound in
\eqref{eq:Gelfand} became an open problem. While attending a lecture of A. Pajor, S. Foucart saw a simple way to get the lower bound
for $c_n(id:\ell_1^N\to\ell_2^N)$. The proof avoided the use of Carl's inequality and invoked some techniques from compressed sensing.
Furthermore, the argument could be carried over to all $p\le 1$ and was then published in \cite{FPRU}, where the authors
prove the lower bound in \eqref{eq:Gelfand} for all $0<p<2$, cf. also \cite{FR}.


Although \eqref{eq:Gelfand} was proved, the question if Carl's inequality allows for an extension to quasi-Banach spaces and Gelfand numbers remained open.
Indeed, the authors of \cite{FPRU} expressed their belief that ``Carl's theorem actually fails for Gelfand widths of
general quasi-norm balls''.

\subsection{Lower bound on Gelfand numbers from Carl's inequality}\label{CarlGelfand}
In this section, we sketch the use of Carl's inequality \eqref{eq:orig1} in the proof of the lower bound in \eqref{eq:Gelfand}.
This argument appeared first in \cite{CP} and we reproduce it (with only minor modifications) here for reader's convenience.

\begin{theo}
	For $N\in\N$, $1\leq n\leq N$ and $0<p\leq q\leq 2$ it holds
\begin{equation}\label{eq:gelf1}
c_{p,q}\min\left\{1,\frac{1+\log(N/n)}{n}\right\}^{\frac{1}{p}-\frac{1}{q}}\leq c_n(id\colon\ell_p^N\to\ell_q^N)\leq C_{p,q}\min\left\{1,\frac{1+\log(N/n)}{n}\right\}^{\frac{1}{p}-\frac{1}{q}}
\end{equation}
	for some constants $c_{p,q},C_{p,q}$ not depending on $n$ and $N$.
\end{theo}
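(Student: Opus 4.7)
The theorem has two halves. The upper bound was obtained first for $p=1$ by Garnaev--Gluskin--Kashin \cite{GG,G,Kashin} and extended to $0<p<1$ in \cite{D,LGM,Vyb}; I would simply invoke these references and focus on the lower bound. The key input for the lower bound is the sharp entropy-number asymptotics of Sch\"utt and others: for $0<p\le q\le 2$ and $1\le k\le N$ one has $e_k(id:\ell_p^N\to\ell_q^N)\asymp_{p,q}\min\{1,(\log(1+N/k)/k)^{1/p-1/q}\}$, see \cite{S,Thomas,GL} or \cite[Section 3.2.2]{ET}.

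Set $\beta:=1/p-1/q$, fix an exponent $\alpha>\beta$ to be chosen later, and define $S(m):=\sup_{1\le k\le m}k^{\alpha}c_k(id)$. Applying Theorem \ref{theo:main} at scale $2n$, together with the entropy lower bound at $k=2n$ (which is nontrivial for $\log N\lesssim n\lesssim N$), yields
$$S(2n)\ \ge\ \gamma_\alpha^{-1}(2n)^\alpha e_{2n}(id)\ \gtrsim\ n^{\alpha-\beta}\bigl(\log(1+N/n)\bigr)^{\beta}.$$
In the opposite direction, inserting the already-quoted upper bound $c_k(id)\lesssim(\log(1+N/k)/k)^{\beta}$ into the definition of $S(m)$, and using that for $\alpha>\beta$ the function $k\mapsto k^{\alpha-\beta}(\log(1+N/k))^{\beta}$ is increasing on the range $k\lesssim N$, gives the matching upper bound $S(2n)\lesssim n^{\alpha-\beta}(\log(1+N/n))^{\beta}$.

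The main obstacle is to convert this two-sided control on the weighted supremum $S(2n)$ into a pointwise bound on $c_n$ itself. The plan is a doubling trick: choose $\alpha-\beta$ large enough that $2^{\alpha-\beta}$ strictly dominates the ratio of the implicit constants appearing in the last two displays. For such $\alpha$ one has $S(2n)>S(n)$, which forces the supremum defining $S(2n)$ to be realized at some index $k^{*}\in(n,2n]$; at that $k^*$ one reads off
$$c_{k^*}\ \ge\ S(2n)/(k^*)^\alpha\ \gtrsim\ n^{-\beta}\bigl(\log(1+N/n)\bigr)^{\beta}.$$
Monotonicity of Gelfand numbers ($c_n\ge c_{k^*}$ because $n\le k^*$) then delivers $c_n(id)\gtrsim(\log(1+N/n)/n)^{\beta}$, which is the required lower bound in the nontrivial range. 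The boundary regimes $n\lesssim\log N$ (where the target reduces to $c_n\gtrsim 1$) and $n$ close to $N$ follow from monotonicity of $c_n$ in $n$ together with the trivial identity $c_1(id)=\|id:\ell_p^N\to\ell_q^N\|=1$, which transports the lower bound from a representative middle-regime index.
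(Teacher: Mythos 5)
Your overall strategy --- Theorem \ref{theo:main} combined with the two-sided entropy asymptotics \eqref{eq:entropy} and the known upper bound on the Gelfand numbers, followed by a localization step that extracts a pointwise lower bound on $c_n$ from the weighted supremum --- is the same as the paper's. The genuine gap is in the localization step. You want $S(2n)>S(n)$ and propose to force this by taking $\alpha-\beta$ so large that the gain $2^{\alpha-\beta}$ coming from evaluating the entropy bound at $2n$ dominates the ratio of the implicit constants. But that ratio contains the Carl constant $\gamma_\alpha$, which itself grows with $\alpha$: the paper's own estimate (Remark \ref{rem:const}, inequality \eqref{eq:gamma}) is $\gamma_\alpha\le 2^{2\alpha+3/q+2/p}(2/p+\alpha+1/q+2)^\alpha$, and no proof of Carl's inequality, even in the Banach case, yields $\gamma_\alpha=o(2^{\alpha})$. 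Hence the required inequality $2^{\alpha-\beta}>\gamma_\alpha\cdot\mathrm{const}$ cannot be achieved by enlarging $\alpha$; it only gets worse. The repair is to decouple the two parameters: fix the exponent (the paper takes $2\beta$ with $\beta=1/p-1/q$) and dilate by a large factor $\lambda$ instead of $2$. Splitting $\sup_{1\le j\le n}j^{2\beta}c_j$ at $j=\lfloor n/\lambda\rfloor$, the low part is at most $C_{p,q}\bigl((1+\log\lambda)/\lambda\bigr)^{\beta}\bigl(n(1+\log(N/n))\bigr)^{\beta}$ by the known upper bound and the monotonicity of $x\mapsto x(1+\log(N/x))$, and since $(1+\log\lambda)/\lambda\to 0$ while $\gamma_{2\beta}$ and $C_{p,q}$ stay fixed, a large but fixed $\lambda_0$ makes the high part dominate and yields $c_{\lceil n/\lambda_0\rceil}\gtrsim\bigl((1+\log(N/n))/n\bigr)^{\beta}$. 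This is precisely your $k^{*}$ argument with the dilation factor $2$ replaced by a tunable $\lambda_0$, and it is what the paper does.

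A smaller but real defect is your treatment of the regime where $n$ is comparable to $N$. Gelfand numbers are non-increasing, so monotonicity transports lower bounds from \emph{larger} to \emph{smaller} indices; it cannot carry a bound established at a middle-regime index up to $n$ near $N$, and $c_1(id)=\|id\|=1$ gives nothing there. What is needed is the exact value
\begin{equation*}
c_N(id\colon\ell_p^N\to\ell_q^N)=\inf_{x\neq 0}\frac{\|x\|_q}{\|x\|_p}=N^{-(1/p-1/q)},
\end{equation*}
obtained because a subspace of codimension less than $N$ still contains a line; then $c_n\ge c_N\asymp\bigl((1+\log(N/n))/n\bigr)^{\beta}$ for $N/\lambda_0\le n\le N$. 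The regime $n\lesssim\log N$ is handled as you say, via $c_n\ge c_{\lceil\log N\rceil}\gtrsim 1$. With these two repairs your argument coincides with the paper's proof.
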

\begin{proof}
The upper bound of this inequality was already provided in \cite{Vyb}, so it only remains to prove the lower bound.
We follow the ideas of \cite[Corollary 2.6]{CP}.
By \cite{Thomas,S}, it is known that for $0<p\le q\le\infty$
\begin{equation}\label{eq:entropy}
e_n(id:\ell_p^N\to\ell_q^N)\approx\begin{cases}1&1\le n\le \log N\\\Bigl(\frac{1+\log(N/n)}{n}\Bigr)^{1/p-1/q}&\log N\le n\le N\\2^{-n/N}N^{1/q-1/p}&N\le n,\end{cases}
\end{equation}
where the constants of equivalence do not depend on the natural numbers $n$ and $N$.
Using Carl's inequality \eqref{eq:orig1}, the results on entropy numbers \eqref{eq:entropy} and the upper bound in \eqref{eq:Gelfand},
we can deduce the lower bound in \eqref{eq:Gelfand} as detailed below.

For $p=q$, \eqref{eq:gelf1} follows from $c_n(id:\ell_p^N\to\ell_p^N)=1$ in this case. We shall therefore assume that $p<q$ and for brevity
let us set $\alpha=1/p-1/q>0$. Using Carl's inequality we obtain for any natural number $n$ with $\log N\leq n\leq N$
	\begin{align}\label{eq:estimate1}
	C(n(1+\log(N/n)))^\alpha
	&\leq n^{2\alpha}e_n(id\colon\ell_p^N\to\ell_q^N)
	\leq\sup\limits_{1\leq j\leq n}j^{2\alpha}e_j(id\colon\ell_p^N\to\ell_q^N)\notag\\
	&\leq \gamma_{2\alpha}\sup\limits_{1\leq j\leq n}j^{2\alpha}c_j(id\colon\ell_p^N\to\ell_q^N)
	\end{align}
	for some constant $C$ from \eqref{eq:entropy}. For some $\lambda>1$, which we shall fix later on, let us split up this supremum into two parts to get
	\begin{align}\label{eq:sums1}
	\sup\limits_{1\leq j\leq n}j^{2\alpha}c_j(id\colon\ell_p^N\to\ell_q^N)
	\leq\sup\limits_{1\leq j\leq \lfloor\frac{n}{\lambda}\rfloor}j^{2\alpha}c_j(id\colon\ell_p^N\to\ell_q^N)
	+\sup\limits_{\lceil\frac{n}{\lambda}\rceil\leq j\leq n}j^{2\alpha}c_j(id\colon\ell_p^N\to\ell_q^N).
	\end{align}
We estimate the first summand on the right hand side by the upper bound in \eqref{eq:gelf1}
	\begin{align*}
	\sup\limits_{1\leq j\leq \lfloor\frac{n}{\lambda}\rfloor}j^{2\alpha}c_j(id\colon\ell_p^N\to\ell_q^N)
	\leq C_{p,q}\sup\limits_{1\leq j\leq \lfloor\frac{n}{\lambda}\rfloor}j^{2\alpha}\left(\frac{1+\log(N/j)}{j}\right)^\alpha
	\leq C_{p,q}\left(\frac{n(1+\log(\lambda N/n))}{\lambda}\right)^\alpha,
	\end{align*}
	where we used that the function $x\to (x\cdot(1+\log(N/x)))^\alpha$ is increasing for $1\leq x\leq N$. Using $\lambda>1$
	we end up with
	\begin{align}\label{eq:estimate2}
	\sup\limits_{1\leq j\leq \lfloor\frac{n}{\lambda}\rfloor}j^{2\alpha}c_j(id\colon\ell_p^N\to\ell_q^N)
	&\leq C_{p,q}\left(\frac{n(1+\log \lambda+\log(N/n))}{\lambda}\right)^\alpha\notag\\
	&\leq C_{p,q}\left(\frac{1+\log \lambda}{\lambda}\cdot n(1+\log(N/n))\right)^\alpha.
	\end{align}
	The second summand in \eqref{eq:sums1} can easily be estimated by monotonicity of Gelfand numbers
	\begin{equation}\label{eq:estimate3}
	\sup\limits_{\lceil\frac{n}{\lambda}\rceil\leq j\leq n}j^{2\alpha}c_j(id\colon\ell_p^N\to\ell_q^N)
	\leq n^{2\alpha} c_{\lceil\frac{n}{\lambda}\rceil}(id\colon\ell_p^N\to\ell_q^N).
	\end{equation}
	Putting the estimates \eqref{eq:estimate1}, \eqref{eq:sums1}, \eqref{eq:estimate2} and \eqref{eq:estimate3} together we arrive at
	\begin{align*}
	\gamma_{2\alpha}c_{\lceil\frac{n}{\lambda}\rceil}(id\colon\ell_p^N\to\ell_q^N)
	\geq\left(C-\gamma_{2\alpha}C_{p,q}\left(\frac{1+\log \lambda}{\lambda}\right)^\alpha\right)\left(\frac{1+\log(N/n)}{n}\right)^\alpha.
	\end{align*}
	Observing that $(1+\log \lambda)/\lambda\to0$ for $\lambda\to\infty$, there exists some $\lambda_0>1$
	such that
	\begin{align}\label{eq:lambda0}
	c_{\lceil\frac{n}{\lambda_0}\rceil}(id\colon\ell_p^N\to\ell_q^N)
	\geq C'\left(\frac{1+\log(N/n)}{n}\right)^\alpha
	\end{align}
	holds for all $n\in\N$ with  $\log N\leq n\leq N$ and some constant $C'>0$ independent of $n$ and $N$.
In case of $n<\lambda_0$, \eqref{eq:lambda0} remains true with only minor modification of the argument. The first supremum in \eqref{eq:sums1}
becomes empty and \eqref{eq:sums1} is not needed. Finally, \eqref{eq:lambda0} follows simply by \eqref{eq:estimate1} and \eqref{eq:estimate3}.

Next we prove the lower bound in \eqref{eq:gelf1} for all $k\in\N$ with $\log N \le k\le N/\lambda_0$ (if such $k$ exist). We put $n=\lfloor \lambda_0(k-1)+1\rfloor$ such that $n\le \lambda_0k\le N$
and $\lceil n/\lambda_0\rceil=k.$ By monotonicity of the function $x\to (1+\log(N/x))/x$ we therefore obtain
\begin{align*}
c_{k}(id\colon\ell_p^N\to\ell_q^N)&\geq C'\left(\frac{1+\log(N/n)}{n}\right)^\alpha
\ge C'\left(\frac{1+\log(N/(\lambda_0 k))}{\lambda_0k}\right)^\alpha\\
&\geq \frac{C'}{\lambda_0^\alpha(1+\log\lambda_0)^\alpha}\left(\frac{1+\log(N/k)}{k}\right)^\alpha=C''\left(\frac{1+\log(N/k)}{k}\right)^\alpha.
\end{align*}


It remains to prove \eqref{eq:gelf1} for $n<\log N$ and for $N/\lambda_0\leq n\leq N$. If $n<\log N$, then the claim follows from
$c_n(id\colon\ell_p^N\to \ell_q^N)\geq c_{\lceil\log N\rceil}(id\colon\ell_p^N\to \ell_q^N)$.
Finally, for $N/\lambda_0\leq n\leq N$ we use
\begin{align*}
c_n(id\colon\ell_p^N\to \ell_q^N)&\geq c_N(id\colon\ell_p^N\to \ell_q^N)=\inf_{\substack{M\subset \ell_p^N\\\codim M<N}}\sup\limits_{\substack{x\in M\\\|x\|_p\leq 1}}\|x\|_q
=\inf_{\substack{M'\subset \ell_p^N\\\dim M'=1}}\sup\limits_{\substack{x\in M'\\\|x\|_p\leq 1}}\|x\|_q\\
&=\inf_{x\in\ell_p^N, x\not=0}\frac{\|x\|_q}{\|x\|_p}=\|id\colon\ell_q^N\to\ell_p^N\|^{-1}=\Bigl(\frac{1}{N}\Bigr)^{\frac{1}{p}-\frac{1}{q}}.
\end{align*}
\end{proof}


\begin{thebibliography}{99}
\bibitem{Aoki} T. Aoki, Locally bounded linear topological spaces, Proc. Imp. Acad. Tokyo 18 (10) (1942), 588--594
\bibitem{BBP} J. Bastero, J. Bernu\'es, and A. Pe\~{n}a, An extension of Milman's reverse Brunn-Minkowski inequality, Geom. Funct. Anal. 5 (3) (1995), 572--581
\bibitem{BCKV} H. Boche, R. Calderbank, G. Kutyniok, and J. Vyb\'\i ral, A survey of compressed sensing, in: Compressed Sensing and its Applications, Birkh\"auser, Boston, 2015
\bibitem{CRT} E.J. Cand\`es, J. Romberg, and T. Tao, Robust uncertainty principles: exact signal reconstruction from highly incomplete frequency information, IEEE Trans. Inform. Theory 52 (2) (2006), 489--509
\bibitem{BC} B. Carl, Entropy numbers, $s$-numbers, and eigenvalue problems, J. Funct. Anal. 41 (3) (1981), 290--306
\bibitem{CP} B. Carl and A. Pajor, Gel'fand numbers of operators with values in a Hilbert space, Invent. Math. 94 (3) (1988), 479--504
\bibitem{CaSt} B. Carl and I. Stephani, Entropy, compactness and the approximation of operators, Cambridge University Press, 1990
\bibitem{CaTr} B. Carl and H. Triebel, Inequalities between eigenvalues, entropy numbers, and related quantities of compact operators in Banach spaces, Math. Ann. 251 (2) (1980), 129--133
\bibitem{CDD} A. Cohen, W. Dahmen, and R. A. DeVore, Compressed sensing and best k-term approximation, J. Amer. Math. Soc. 22 (1) (2009), 211--231
\bibitem{RD} R.A. DeVore, Nonlinear approximation, Acta Numer. 7 (1998), 51--150
\bibitem{DL} R.A. DeVore and G.G. Lorentz, Constructive approximation, Springer, Berlin, 1993
\bibitem{D} D.L. Donoho, Compressed sensing, IEEE Trans. Inform. Theory 52 (2006), 1289--1306
\bibitem{ET} D.E. Edmunds and H. Triebel, Function spaces, entropy numbers and differential operators, Cambridge University Press, 1996
\bibitem{FPRU} S. Foucart, A. Pajor, H. Rauhut, and T. Ullrich, The Gelfand widths of $\ell_p$-balls for $0<p\le 1$, J. Compl. 26 (6) (2010), 629--640
\bibitem{FR} S. Foucart and H. Rauhut, A mathematical introduction to compressive sensing, Birkh\"auser, Boston, 2013
\bibitem{GG} A. Garnaev and E. Gluskin, On widths of the Euclidean ball, Sov. Math. Dokl. 30 (1984), 200--204
\bibitem{GBach} M. Gerhold, Entropy, approximation and Kolmogorov numbers on quasi-Banach spaces, Bachelor thesis, Friedrich-Schiller University, Jena, 2011.
\bibitem{G} E. Gluskin, Norms of random matrices and widths of finite-dimensional sets, Math. USSR-Sb. 48  (1984), 173--182
\bibitem{GK} Y. Gordon and N. J. Kalton, Local structure theory for quasi-normed spaces, Bull. Sci. Math., 118 (1994), 441--453
\bibitem{GL} O. Gu\'edon and A.E. Litvak, Euclidean projections of a $p$-convex body, Geometric aspects of functional analysis, 
Israel Seminar (GAFA) 1996-2000, V.D. Milman and G. Schechtman, eds., Lecture Notes in Mathematics, 1745, 95--108, Springer, Berlin, 2000
\bibitem{K-HB} N. Kalton, Basic sequences in F-spaces and their applications. Proc. Edinburgh Math. Soc. 19 (2) (1974/75), 151--167
\bibitem{Kalt} N. Kalton, Quasi-Banach spaces. In: Handbook of the geometry of Banach spaces, Vol. 2, 1099--1130, North-Holland, Amsterdam, 2003
\bibitem{Kashin} B. Kashin, Diameters of some finite-dimensional sets and classes of smooth functions, Math. USSR, Izv. 11 (1977), 317--333 
\bibitem{KT} B. Kashin and V. Temlyakov, A remark on the problem of compressed sensing, Mat. Zametki 82 (6) (2007), 829--837; translation in Math. Notes 82 (5--6) (2007), 748--755
\bibitem{Koenig} H. K\"onig, Eigenvalue distribution of compact operators, Birkh\"auser, Basel, 1986
\bibitem{Thomas} T. K\"uhn, A lower estimate for entropy numbers, J. Approx. Theory 110 (1) (2001), 120--124
\bibitem{LGM} G.G. Lorentz, M.v. Golitschek, and Y. Makovoz, Constructive approximation. Advanced problems, Springer, Berlin, 1996
\bibitem{M} P. Math\'e, s-numbers in information-based complexity, J. Compl. 6 (1) (1990), 41--66
\bibitem{N} E. Novak, Optimal recovery and n-widths for convex classes of functions. J. Approx. Theory 80 (3) (1995), 390--408
\bibitem{Pietsch} A. Pietsch, Operator ideals, North-Holland Publishing Co., Amsterdam-New York, 1980
\bibitem{Pietsch2} A. Pietsch, Eigenvalues and s-numbers, Cambridge University Press, 1987
\bibitem{Pinkus} A. Pinkus, $N$-widths in approximation theory, Springer, Berlin, 1985
\bibitem{Pinkus2} A. Pinkus, $N$-widths and optimal recovery,  Proc. Symp. Appl. Math. 36. Lect. Notes AMS Short Course edition (1986), 51--66
\bibitem{Pisier} G. Pisier, The volume of convex bodies and Banach space geometry, Cambridge University Press, 1999
\bibitem{Rol} S. Rolewicz, On a certain class of linear metric spaces, Bull. Acad. Polon. Sci. S\'er. Sci. Math. Astrono. Phys. 5 (1957), 471--473
\bibitem{S} C. Sch\"utt, Entropy numbers of diagonal operators between symmetric Banach spaces, J. Approx. Theory 40 (2) (1984), 121--128
\bibitem{Vyb} J. Vyb\'\i ral, Widths of embeddings in function spaces, J. Compl. 24 (4) (2008), 545--570
\end{thebibliography}
\end{document}